\let\cite=\citet
\newcommand{\NN}{{\mathbb N}}
\newcommand{\EE}{{\mathbb E}}
\newcommand{\PP}{{\mathbb P}}
\renewcommand{\geq}{\geqslant}
\renewcommand{\le}{\leqslant}
\newcommand\1{\leavevmode\hbox{\rm \small1\kern-0.35em\normalsize1}}
\def\A{{\mathcal A}}
\def\B{{\mathcal B}}
\def\C{{\mathcal C}}
\def\D{{\mathcal D}}
\def\E{{\mathcal E}}
\def\F{{\mathcal F}}
\def\M{{\mathcal M}}
\def\blfootnote{\xdef\@thefnmark{}\@footnotetext}
\numberwithin{equation}{section}
\theoremstyle{plain}
\newtheorem{Theo}{Theorem}
\newtheorem{Ex}{Example}
\newtheorem*{Exe}{Example \ref{exITA}}
\newtheorem*{Exem}{Example \ref{ex:3lettres}}
\newtheorem{Prop}{Proposition}
\newtheorem{Cor}{Corollary}
\newtheorem{Lem}{Lemma}
\newtheorem{Def}{Definition}
\newtheorem{Rem}{Remark}
\begin{document}

\title{On non-regular $g$-measures}
\author{Sandro Gallo}
\author{Fr\'ed\'eric Paccaut}
\begin{abstract}
We prove that $g$-functions whose set of discontinuity points has strictly negative topological pressure and which satisfy an assumption that is weaker than non-nullness, have at least one stationary $g$-measure. We also obtain uniqueness by adding conditions on the set of continuity points.
\end{abstract}

\maketitle\blfootnote{{\it MSC 2010}: 60J05, 37E05.}\blfootnote{{\it Keywords}: $g$-measure, topological pressure, context tree}\blfootnote{Both authors were partially supported by CAPES grant AUXPE-PAE-598/2011}

\section{Introduction}

The $g$-measures on $A^{\mathbb{Z}}$ ($A$ discrete) are the measures for which the conditional probability of one state at any time, given the past, is specified by a function $g$, called $g$-function. In this paper, $g$-measures will always refer to \emph{stationary} measures. The main  question we answer in the present paper is the following:  what conditions on $g$-functions $g$ will ensure the existence of a (stationary) $g$-measure? 

It is well-known that the continuity of $g$ implies existence if the alphabet $A$ is finite. Here we extend this result to discontinuous $g$-functions by proving that existence holds whenever the topological pressure of the set of discontinuities of $g$ is strictly negative, even when $g$ is not necessarily non-null.

\vspace{0.3cm}

The name $g$-measure was introduced by \cite{Keane} in Ergodic Theory to refer to an extension of the Markov measures, in the sense that the function $g$ may depend on a unbounded part of the past. In the literature of stochastic processes, these objects already existed under the names ``Cha\^ines \`a liaison compl\`ete'' or ``chains of infinite order'', respectively coined by \cite{doeblin/fortet/1937} and \cite{harris/1955}. The function $g$ is also called set of transition probabilities, or probability kernel. Given a function $g$ (or \emph{probability kernel}), the most basic questions are the following: does it specify a $g$-measure (or stationary \emph{stochastic process})? If yes, is it unique? 
To answer these questions, the literature mainly focussed on the continuity  assumption for $g$ (see \cite{onicescu/mihoc/1935, doeblin/fortet/1937, harris/1955, Keane, Ledrappier, JO, fernandez/maillard/2005} and many other). This assumption gives ``for free'' the existence of the $g$-measure. For this reason, uniqueness and the study of the statistical properties of the resulting unique measure have been the centre of the attention from the beginning of the literature.
Only recently, \cite{gallo/2009, CCPP, desantis/piccioni/2012} studied $g$-measures with functions $g$ that were not necessarily continuous. However, no general criteria has been given regarding the existence issue, either because these works are example-based, or because the obtained conditions  are restrictive, implying both existence and uniqueness. This rises a natural motivation for  finding a general criteria for the existence of $g$-measures.

\vspace{0.3cm}

A second motivation is the analogy with one-dimensional Gibbs measures. In statistical mechanics, the function specifying the conditional probabilities  with respect to both \emph{past and future} is called a specification. The theorem of \cite{kozlov/1974} states that Gibbs measures have continuous and strictly positive specifications. 
 Stationary measures having support on the set of points where the specification is continuous are called almost-Gibbsian (\cite{maes/redig/vanMoffaert/leuven/1999}). Clearly, Gibbsian measures are almost-Gibbsian. \cite{FGM} proved that regular $g$-measures (associated to continuous and strictly positive function $g$) might not be Gibbs measures, still they are always almost-Gibbsian. Thus, although the nomenclature of Gibbsianity cannot be imported directly to the case of $g$-measures, it is tempting to try to find ``almost-regular" $g$-measures. 
 
 \vspace{0.3cm}

Going further in the analogy between $g$-measures and (almost-)Gibbs measures, a natural idea is to look for a $g$-measure having support inside the set of continuity points of $g$. Of course, it is not an easy task to control the support of a measure before knowing its existence. An idea is then to put a topological assumption on the set of discontinuity points of $g$, ensuring that this set will have $\mu$-measure $0$, whenever the $g$-measure $\mu$ exists. In the vein of \cite{BPS}, this is done in the present paper by using the topological pressure of the set of discontinuity points of $g$. 
Theorem \ref{existence} states that there exist  $g$-measures when the function $g$ has a set of discontinuity points with negative topological pressure, even without assuming non-nullness. As a corollary (Corollary \ref{coro}), a simple condition on the set of discontinuity points of a function $g$ is given, which may appear more intuitive to the reader not familiar with the concept of topological pressure. The set of discontinuity points of $g$ can be seen as a tree where each branch is $A^{-\mathbb{N}}$. The new condition is that the upper exponential growth rate of this tree is smaller than a constant that depends on $\inf_X g$ (or, if non-nullness is not assumed, on some parameter explicitly computable on $g$). Our last result (Theorem \ref{theo:ExistsUniqueMixing}), based on the work of \cite{JO}, gives explicit sufficient conditions on the set of continuity points of discontinuous kernels $g$ (satisfying our conditions of existence) ensuring uniqueness.

\section{Notations, definitions and main results}

Let $(A,\A)$ be a measurable space, where $A$ is a finite set (the alphabet) and $\A$ is the associated discrete $\sigma$-algebra. We will denote by $|A|$ the cardinal of $A$. Define $X=A^{-\NN}$ (we use the convention that $\mathbb{N}=\{0,1,2,\ldots\}$), endowed with the product of discrete topologies and with the $\sigma$-algebra $\F$ generated by the coordinate applications. For any $x\in X$, we will use the notation $x=(x_{-i})_{i\in \NN}=x_{-\infty}^{0}=\ldots x_{-1}x_{0}$. For any $x\in X$ and $z\in X$, we denote, for any $k\ge0$,  $zx_{-k}^{0}=\ldots z_{-2}z_{-1}z_{0}x_{-k}\ldots x_{0}$, the concatenation between $x_{-k}^{0}$ and $z$.
In other words, $zx_{-k}^{0}$ denotes a new sequence $y\in X$ defined by $y_{i}=z_{i+k+1}$ for any $i\leq -k-1$ and $y_{i}=x_{i}$ for any $-k\leq i\leq 0$. Finally, the length of any finite string $v$ of elements of $A$, that is, the number of letters composing the string $v$, will be written $|v|$.\\

Define the shift mapping $T$ as follows :
$$
\begin{array}{cccc}
T: & X & \rightarrow & X \\
\  & (x_n)_{n\le 0} & \mapsto & (x_{n-1})_{n\le 0}.
\end{array}
$$
The mapping $T$ is continuous and has $|A|$ continuous branches called $T_a^{-1}, a\in A$.
Denote by $\M$ the set of Borelian probability measures on $X$, by $\B$ the set of bounded functions and by $\C$ the set of continuous functions.
The characteristic functions will be written $\1$.

A $g$-function is a $\F$-measurable function $g:X\to [0,1]$ such that
\begin{equation}\label{g-function}
\forall x\in X,\,\,\, \sum_{y:T(y)=x}g(y)=\sum_{a\in A}g(xa)=1.
\end{equation}

\begin{Ex}\emph{
Matrix transitions of $k$-steps Markov chains, $k\ge1$, are the simplest example of $g$-functions. They satisfy $g(xa)=g(ya)$ whenever $x_{-k+1}^{0}=y_{-k+1}^{0}$, $\forall a$. }
\end{Ex}
\begin{Ex}\emph{\label{comb} Let us introduce one of the simplest examples of non-Markovian $g$-function on $A=\{0,1\}$. Let $(q_{n})_{n\in \NN\cup\{\infty\}}$ be a sequence of $[0,1]$-valued real numbers. Set $\tilde{g}(x1)=q_{\ell(x)}$ where $\ell(x):=\inf\{k\geq 0:x_{-k}=1\}$ for any $x\in A^{-\mathbb{N}}$ (with the convention that $\ell(x)=\infty$ whenever $x_{-i}=0$ for all $i\le 0$). Notice that the value of $\tilde{g}(x)$ depends on the distance to occurrence of a symbol $1$ in the sequence $\ldots x_{-1}x_{0}$. Therefore, for any $k\geq 1$ the property that $g(xa)=g(ya)$ whenever $x_{-k+1}^{0}=y_{-k+1}^{0}=0_{-k+1}^{0}$ does not hold. This is not the transition matrix of a Markov chain. We will come back to this motivating example several times throughout this paper.}
\end{Ex}

\begin{Def}
An $A$-valued stochastic processes $(\xi_n)$ defined on a probability space $(\Omega,\mathcal{G},\PP)$ is specified by a given $g$-function $g$ if
$$
{\PP}(\xi_0=a|(\xi_k)_{k<0})=g(\ldots \xi_{-2}\xi_{-1}a)\ \ \PP\ \mbox{almost surely}.
$$
The distribution of a \emph{stationary} process $(\xi_n)$ of this form is called a $g$-measure.
\end{Def}
Here is a more ergodic oriented, equivalent definition:

\begin{Def}Let $g$ be a $g$-function. A probability measure $\mu\in\M$ is called a $g$-measure if $\mu$ is $T$-invariant and for $\mu$ almost every $x\in X$ and for every $a\in A$:
$$
{\EE}_{\mu}(\1_{\{x_0=a\}}|\F_1)(x)=g(T(x)a).
$$
with $\F_1=T^{-1}\F$.
\end{Def}

Given a $g$-function, the existence of a corresponding  $g$-measure is not always guaranteed. For instance, coming back to example \ref{comb}, \cite{CCPP} proved that if $\prod_{k\ge1}\sum_{i=0}^{k-1}(1-q_{i})=\infty$ and $q_{\infty}>0$, then there does not exist any  $g$-measures for $\tilde{g}$. Another simple example is given by \cite{Keane} on the torus. In general, a sufficient condition for the existence of a  $g$-measure corresponding to some fixed $g$-function is to assume that $g$ is continuous in every point (see \cite{Keane} for instance). Continuity here is understood  with respect to the discrete topology, that is, $g$ is continuous at the point $x$ if for any $z$, we have
\[
g(zx_{-k}^{0})\stackrel{k\rightarrow\infty}{\longrightarrow} g(x).
\] 
Continuity is nevertheless not necessary for existence, as shown, one more time, by the $g$-function $\tilde{g}$ of example \ref{comb}. For instance, let $q_{i}=\epsilon<1/2$ when $i$ is odd and $q_{i}=1-\epsilon$ when $i$ is even, and put $q_{\infty}>0$. Observe that in this case $\tilde{g}$ has a discontinuity at $0_{-\infty}^{0}$, since $\tilde{g}(1_{-\infty}^{0}0_{-k}^{0})$ oscillates between $\epsilon$ and $1-\epsilon$ when $k$ increases. But it is well-known that $\tilde{g}$ has a  $g$-measure (see \cite{CCPP} or \cite{gallo/2009} for instance). \\

The preceding observations yield to our first issue, which is to give a general condition on the set of discontinuities of $g$, under which there still exists a  $g$-measure.  This is the content of Theorem \ref{existence} which we will state after introducing some further definitions. \\

The cylinders are defined in the usual way by
$$
C_n(x)=\{w\in X, w_{-n+1}^0=x_{-n+1}^0\}\,,\,\,\forall x\in X,
$$
and the set of $n$-cylinders is
$$
\C_n=\{C_n(x), x\in X\}.
$$
Define, for $x\in X$ and $n\in{\NN}$, $n\ge1$
$$
g_n(x)=\prod_{i=0}^{n-1}g(T^i(x)).
$$
The topological pressure of a measurable set $S\subset X$ is defined by
$$
P_g(S)=\limsup_{n\to+\infty}\frac1n\log\sum_{{B\in\C_n}\atop{B\cap S\neq\emptyset}}\sup_Bg_n.
$$
Let $\D$ be the set of discontinuity points of $g$. Let $\C_n(\D)$ be the union of $n$-cylinders that intersect $\D$ :
$$
\C_n(\D)=\bigcup_{x\in\D}C_n(x).
$$
For $n\in\NN$, set $\E_n=T^{-1}T\C_{n+1}(\D)$ (notice that $\E_0=X$ and $\E_{n+1}\subset\E_n$). $\E_n$ is the set of points that write $yx_{-n}^{-1}a$, with $a\in A$, $x_{-\infty}^0\in\D$ and $y\in X$.

\begin{Theo}\label{existence}
Let $g$ be a $g$-function with discontinuity set $\D$. Assume 
$$
\begin{array}{ll}
{\bf (H1)} & {\exists N\in\NN,\,\exists\varepsilon>0,\,\inf_{\E_N}g=\varepsilon},\\
{\bf (H2)} & P_g(\D)<0,
\end{array}
$$
{then there exists at least a $g$-measure and its support is contained in $X\setminus\mathcal{D}$}.
\end{Theo}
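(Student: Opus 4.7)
\emph{Proof plan.} My strategy is to approximate $g$ by continuous $g$-functions, invoke the classical existence result of \cite{Keane} in that setting, and pass to the limit using (H1) and (H2).

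Fix a reference $\bar z\in X$ and for $n\ge 1$ define
$$
\hat g_n(y):=g(\bar z\,y_{-n+1}^0),
$$
i.e.\ $g$ evaluated at the concatenation of the fixed tail $\bar z$ with $y_{-n+1}^0$. Then $\hat g_n$ depends only on the last $n$ coordinates of $y$, so it is locally constant and continuous, and $\sum_{a}\hat g_n(xa)=\sum_{a}g(\bar z\,x_{-n+2}^0a)=1$ by~\eqref{g-function}; moreover $\hat g_n(y)\to g(y)$ at every $y\in X\setminus\D$. Being a continuous $g$-function on the compact space $X$, each $\hat g_n$ admits a $\hat g_n$-measure $\nu_n$ via the classical Schauder fixed-point argument applied to the weak-$*$ continuous dual $L_{\hat g_n}^*$ on $\M$. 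Weak-$*$ compactness of $\M$ extracts a subsequential limit $\nu_{n_k}\to\mu$; since $T$ is continuous, $\mu$ is $T$-invariant.

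The crucial step is to show $\mu(\D)=0$. Iterating the defining relation of a $g$-measure $N$ times yields, for any $g$-function $h$ with $h$-measure $\nu$ and any cylinder $B=C_N(x)$,
$$
\nu(B)\;=\;\int\prod_{i=0}^{N-1}h\bigl(y\,x_{-N+1}^{-N+1+i}\bigr)\,d\nu(y)\;\le\;\sup_B h_N,
$$
with $h_N=\prod_{i=0}^{N-1}h\circ T^i$. Applied with $(h,\nu)=(\hat g_n,\nu_n)$ and summed over the cylinders of $\C_N$ that meet $\D$,
$$
\nu_n(\C_N(\D))\;\le\;\sum_{B\cap\D\neq\emptyset}\sup_B(\hat g_n)_N.
$$
Unfolding $\hat g_n$ shows that each factor $\hat g_n(T^iy)$ equals $g$ at a point lying in a cylinder overlapping $B$; here (H1) — giving the uniform lower bound $\varepsilon$ on $g$ on $\E_N$ — lets one compare this to $\sup_B g_N$ uniformly in $n$, up to a multiplicative constant. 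Together with (H2), the right-hand side then decays exponentially in $N$. Since $\C_N(\D)$ is open and $\nu_{n_k}\to\mu$ weak-$*$, the Portmanteau theorem gives $\mu(\C_N(\D))\le\liminf_k\nu_{n_k}(\C_N(\D))\to 0$, whence $\mu(\D)=0$ and $\mathrm{supp}\,\mu\subset X\setminus\D$.

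Finally, to promote $\mu$ to a $g$-measure for $g$ itself, pass to the limit in $\int L_{\hat g_n}f\,d\nu_n=\int f\,d\nu_n$ for each $f\in\C$. The right-hand side tends to $\int f\,d\mu$; on the left, split the domain as $\C_N(\D)\cup(X\setminus\C_N(\D))$. The piece over $\C_N(\D)$ is bounded by $\|f\|_\infty\,\nu_n(\C_N(\D))$ and is negligible by the previous step. On $X\setminus\C_N(\D)$, $\hat g_n\to g$ pointwise and the integrand $x\mapsto\sum_a\hat g_n(xa)f(xa)$ is bounded by $\|f\|_\infty$, so combining dominated convergence with $\nu_n\to\mu$ weak-$*$ produces $\int_{X\setminus\C_N(\D)}L_gf\,d\mu$ in the limit; letting $N\to\infty$ and using $\mu(\D)=0$ delivers $\int L_gf\,d\mu=\int f\,d\mu$. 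Hence $\mu$ is a $g$-measure for $g$. The main obstacle is the uniform-in-$n$ comparison of $\sup_B(\hat g_n)_N$ with $\sup_B g_N$ on cylinders meeting $\D$: it is precisely what (H1) is designed to deliver, and combining this fluctuation control with the exponential decay from (H2) is the technical heart of the proof.
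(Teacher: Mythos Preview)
Your approximation strategy is natural, but the two places you flag as ``the technical heart'' are genuine gaps, not routine details.

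\textbf{The pressure comparison.} You need a bound of the form $\sup_B(\hat g_n)_N\le C\sup_B g_N$ for $B\in\C_N$ meeting $\D$, uniform in $n$, and you claim {\bf (H1)} provides it. It does not, at least not in any way I can see. Unfolding, for $y\in B$ and $n\ge N$,
\[
(\hat g_n)_N(y)=\prod_{i=0}^{N-1}g\bigl(\bar z\,y_{-n+1-i}^{-i}\bigr),
\]
a product of $g$ at $N$ points $p_0,\ldots,p_{N-1}$ that are \emph{not} of the form $T^iw$ for a single $w$ (check: $p_1$ and $Tp_0$ differ at coordinate $-n+1$). So $(\hat g_n)_N(y)$ is never $g_N$ evaluated at a point of $B$, and the most you get is $(\hat g_n)_N(y)\le\prod_i\sup_{C_{N-i}(T^ix)}g$, which is an \emph{upper} bound for $\sup_B g_N$ as well, not a lower one. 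Hypothesis {\bf (H1)} controls $g$ from below on $\E_N$, and in the paper it is used for something quite different: showing $P_g(T\D)\le P_g(\D)$. It does not furnish a ratio bound between $g(p_i)$ and $g(T^ip_0)$, since those points can lie in the same $(n-i)$-cylinder yet differ arbitrarily in $g$-value at discontinuities.

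\textbf{The double limit.} In your final step you simultaneously let $\nu_{n_k}\to\mu$ (weak-$*$) and $L_{\hat g_{n_k}}f\to L_gf$ (pointwise on part of $X$). Neither half works alone: $L_gf$ is discontinuous on $T\D$, so weak-$*$ convergence does not apply to $\int L_gf\,d(\nu_n-\mu)$; and the convergence $L_{\hat g_n}f\to L_gf$ is merely pointwise (and only where $xa\notin\D$ for all $a$, i.e.\ $x\notin T\D$, not $x\notin\C_N(\D)$), so dominated convergence does not control $\int(L_{\hat g_n}f-L_gf)\,d\nu_n$ since the measure is moving. ``Combining dominated convergence with weak-$*$ convergence'' is exactly the step that fails for discontinuous integrands.

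The paper avoids both issues by not approximating $g$ at all. It applies Schauder--Tychonoff directly to $L^*$ acting on the dual $\B'$ of \emph{bounded} (not continuous) functions, obtaining a fixed point $\Lambda\in\B'$; its restriction to $\C$ is a measure $\mu$ by Riesz. Because $\Lambda$ is fixed by the actual operator $L$, one has $\Lambda(\C_n(\D))=\Lambda(L^n\1_{\C_n(\D)})\le\sum_{C\in\C_n(\D)}\sup_C g_n$, so {\bf (H2)} gives $\mu(\D)=0$ immediately---no comparison with an approximating kernel is needed. {\bf (H1)} enters only to prove $P_g(T\D)\le P_g(\D)$, hence $\mu(T\D)=0$. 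The identity $\mu(Lf)=\mu(f)$ is then obtained by approximating $f$ (not $g$) by functions supported away from $\D$, so that $Lf_\varepsilon$ is genuinely continuous and $\Lambda(Lf_\varepsilon)=\mu(Lf_\varepsilon)$.
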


\begin{Rem}Hypothesis {\bf (H1)} is strictly weaker than the ``strong non-nullness'' assumption $\inf_{X}g>0$, since the later corresponds to {\bf (H1)} being satisfied for $N=0$ and Example \ref{ex:3lettres} below satisfies {\bf (H1)} and is not strongly non-null.
\end{Rem}

\begin{Rem}\label{discontinuite_fini}
\emph{Notice that {\bf (H2)} is fulfilled for instance when
$\D$ is a finite set and $\inf_X g>0$ (i.e. {\bf (H1)} is fulfilled with $N=0$). This is, in particular, the case of our simplest Example \ref{comb} when the $q_{i}$'s are oscillating between $\varepsilon$ and $1-\varepsilon$. {Notice also that {\bf (H2)} is fulfilled as well as soon as $\D$ is finite, {\bf (H1)} is fulfilled with $N>1$ and $T\D\subset \D$. This will be an easy consequence of Corollary \ref{coro}.}}
\end{Rem}

\begin{Rem}
\emph{Notice also that {\bf (H2)} implies that $g$ cannot be everywhere discontinuous. Namely, the property \ref{g-function} of a $g$-function entails :
$$
\forall n\in{\NN}^*, \forall y\in X, \sum_{x_{-n+1}^{0}\in A^n}g_{n}(yx_{-n+1}^{0})=1
$$
therefore $\sum_{B\in\C_n}\sup_Bg_n\geq 1$ which in turn implies that $P_g(X)\geq 0$.}
\end{Rem}

\begin{Ex}\label{exITA}\emph{
This example was presented in \cite{desantis/piccioni/2012} (see Example 2 therein) on $\{-1,+1\}$. Here we adapt it on the alphabet $A=\{0,1\}$. As $\tilde{g}$, the $g$-function we introduce here has a unique discontinuity point along $0_{-\infty}^{0}$, but the dependence on the past does not stop at the last occurrence of a $1$. 
Recall that $\ell(x):=\inf\{k\geq 0:x_{-k}=1\}$.
Let $g(0_{-\infty}^{0}1)=\epsilon>0$, and for any $x\neq 0_{-\infty}^{0}$ and any $a\in \{0,1\}$ let
\[
g(xa)=\epsilon+(1-2\epsilon)\sum_{n\geq 1}{\bf 1}\{x_{-\ell(x)-n}=a\}q_{n}^{\ell(x)},
\]
where, for any $l\geq 0$, $(q_{n}^{l})_{n\geq 1}$ is a probability distribution on the integers. 
This kernel has a  discontinuity at $0_{-\infty}^{0}$ since for each $k\in{\NN}$,
\begin{align*}
g(\ldots1110_{-k}^{0}1)=\epsilon+(1-2\epsilon)\sum_{n\geq 1}q_{n}^{k+1}= 1-\epsilon\neq \epsilon,
\end{align*}
but it is continuous at any other point, since for any $x$ such that $\ell(x)=l<+\infty$, for any $z$ and $k>l$
\begin{align*}
g(\ldots z_{-1}z_{0}x_{-k}^{0}1)=\epsilon+(1-2\epsilon)\left[\sum_{j=1}^{k-l}{\bf 1}\{x_{-l-j}=1\}q_{j}^{l}+\sum_{j\geq k-l+1}{\bf 1}\{z_{k-l+1-j}=1\}q_{j}^{l}\right]
\end{align*}
which converges to $g(x1)=\epsilon+(1-2\epsilon)\sum_{j\geq 1}{\bf 1}\{1=x_{-l-j}\}q_{j}^{l}$.  Under some assumptions on the set of distributions $((q_{n}^{l})_{n\geq 1})_{l\geq 0}$, \cite{desantis/piccioni/2012} proved existence, uniqueness and perfect simulation while our Theorem \ref{existence} guarantees existence of a $g$-measure, without any further assumptions on this sequence of distributions. 
}
\end{Ex}

Theorem \ref{existence} involves the notion of topological pressure, which is not always easy to extract from the set of discontinuities. We now introduce two simple criteria on the set $\mathcal{D}$ of a $g$-function, that will imply existence.

\begin{Def}For any $n\geq 0$, let us denote $\mathcal{D}^{n}:=\{x_{-n+1}^{0}\}_{x\in\mathcal{D}}$.
The \emph{upper exponential growth rate of $\mathcal{D}$} is
\begin{equation}\label{eq:growth}
\bar{gr}(\mathcal{D}):=\limsup_{n}|\mathcal{D}^{n}|^{1/n}.
\end{equation}
\end{Def}
Although this nomenclature is generally reserved for trees, we use it here as there exists a natural way to represent the set $\mathcal{D}$ as a rooted tree (a subtree of $A^{-\mathbb{N}}$) with the property that each branch, representing an element of $\mathcal{D}$, is infinite, and each node has between $1$ and $|A|$ sons. For instance, in the particular case of $\tilde{g}$ (Example \ref{comb}), the tree is the single branch $0_{-\infty}^{0}$ and $\mathcal{D}^{n}=0_{-n+1}^{0}$.

\begin{Cor}\label{coro}
{Let $g$ be a $g$-function with discontinuity set $\D$. Assume either,
$$
\begin{array}{ll}
{\bf (H1')}&\exists \varepsilon>0, \inf_{X}g=\varepsilon,\\
{\bf (H2')} & \bar{gr}({\mathcal{D}})<[1-(|A|-1)\varepsilon]^{-1}, \\
\end{array}
$$
or
$$
\begin{array}{ll}
{\bf (H1)} & {\exists N\in\NN,\,\exists\epsilon>0,\,\inf_{\E_N}g=\epsilon},\\
{\bf (H2')} & \bar{gr}({\mathcal{D}})<[1-(|A|-1)\varepsilon]^{-1}, \\
{\bf (H3)} & T\D\subset\D,
\end{array}
$$
then there exists at least a $g$-measure and its support is contained in $X\setminus\mathcal{D}$.}
\end{Cor}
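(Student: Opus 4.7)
The plan is to reduce Corollary~\ref{coro} to Theorem~\ref{existence} by verifying that both sets of hypotheses imply $P_g(\D)<0$. Writing
\[
P_g(\D)=\limsup_n\frac{1}{n}\log\sum_{B\in\C_n,\,B\cap\D\neq\emptyset}\sup_B g_n,
\]
the cylinders $B$ that intersect $\D$ are exactly the $C_n(x)$ with $x_{-n+1}^0\in\D^n$, so their number is $|\D^n|$; by the definition of $\bar{gr}(\D)$ this count contributes $\log\bar{gr}(\D)$ to the pressure. Everything therefore reduces to an upper bound on $\sup_B g_n$ on each such cylinder.

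For the first set of hypotheses I would observe that $\inf_X g=\varepsilon$ combined with the normalisation $\sum_{a\in A}g(ya)=1$ gives $g(ya)\leq 1-(|A|-1)\varepsilon=:M$ for every $y\in X$ and every $a\in A$. Thus $g\leq M$ everywhere, $\sup_B g_n\leq M^n$, and the pressure satisfies
\[
P_g(\D)\leq \log M+\log\bar{gr}(\D)=\log\bigl(M\,\bar{gr}(\D)\bigr)<0
\]
by {\bf (H2')}, so Theorem~\ref{existence} applies.

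For the second set of hypotheses the global bound $g\leq M$ is lost, but it is still valid on $\E_N$ by {\bf (H1)}, and the role of {\bf (H3)} is to propagate the discontinuous structure along the orbit. More precisely, fix $x\in\D$ and $w\in C_n(x)$. For any $0\leq i\leq n-N-1$ one has $T^i(w)_{-N}^{-1}=w_{-N-i}^{-i-1}=x_{-N-i}^{-i-1}=T^i(x)_{-N}^{-1}$, and since $T^i(x)\in\D$ by iterating {\bf (H3)}, this forces $T^i(w)\in\E_N$ and hence $g(T^i(w))\leq M$. For the remaining $N$ indices $i\in\{n-N,\dots,n-1\}$ I would use the trivial bound $g\leq 1$, yielding $\sup_B g_n\leq M^{n-N}$. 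The factor $M^{-N}$ is absorbed in the $\limsup$, and we obtain $P_g(\D)\leq\log(M\,\bar{gr}(\D))<0$ once more, so Theorem~\ref{existence} again applies.

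The only delicate step, and the main obstacle, is the orbit bookkeeping in the second case: one must check that for a linear-in-$n$ range of indices $i$ the shifted point $T^i(w)$ continues to lie in $\E_N$. This is exactly what the invariance assumption {\bf (H3)} enables, and the $N$ uncontrolled indices at the end contribute only a multiplicative constant that vanishes in the pressure limit. No new estimate beyond Theorem~\ref{existence} is needed.
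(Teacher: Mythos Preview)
Your proposal is correct and follows essentially the same route as the paper: in both cases you bound $\sup_B g_n$ by $M^{n}$ (resp.\ $M^{n-N}$) via the normalisation identity, count the relevant cylinders by $|\D^n|$, and combine with {\bf (H2')} to force $P_g(\D)<0$, then invoke Theorem~\ref{existence}. The one point you leave slightly implicit is why the bound $g\leq M$ holds on $\E_N$ (and not just $g\geq\varepsilon$): this uses that $\E_N=T^{-1}T\C_{N+1}(\D)$ is stable under changing the last symbol, so the normalisation argument from the first case still applies---the paper also glosses over this, so no harm done.
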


Intuitively, Corollary \ref{coro} states that if $\varepsilon$ (which plays the role of a ``non-nullness parameter'' for $g$) is sufficiently large, it may compensate the set of discontinuities of $g$, allowing  $g$-measures to exist, with support on the continuity points. Notice that this assumption allows $\mathcal{D}$ to be uncountable, as shown in the following example.

\begin{Ex}\label{ex:3lettres}\emph{
Let $A=\{0,1,2\}$, and consider the function $\ell$ defined as in Examples \ref{exITA} and \ref{comb}. Let also  $N_{0}$, $N_{1}$ and $N_{2}$ be three disjoint finite subsets of $\NN$. The $g$-function is defined as follows: for $x\in\{0,2\}^{-\mathbb{N}}$, put $
g(x1)=g(x0)=0.3$, for $x$ such that $\ell(x)\in N_{0}\cup N_{1}\cup N_{2}$, put
\begin{equation}
\begin{array}{ccc}
g(x1)=g(x2)=1/2&\textrm{if}\,\, \ell(x)\in N_{0}\\
g(x0)=g(x2)=1/2&\textrm{if} \,\,\ell(x)\in N_{1}\\
g(x0)=g(x1)=1/2&\textrm{if} \,\,\ell(x)\in N_{2},
\end{array}
\end{equation}
and for any $x$ such that $\ell(x)\in \NN\setminus \{N_{0}\cup N_{1}\cup N_{2}\}$, put
$$
g(x1)=g(x0)=0.26+\sum_{k\geq 1}\theta_{k}x_{-\ell(x)-k},
$$
where $(\theta_{i})_{i\geq 1}$ satisfies $\theta_{i}\geq 0$ and $\sum_{i\geq 1}\theta_{i}<0.03$. Observe that, for any $x\in\{0,2\}^{-\NN}$, $g(\ldots111x_{-k}^{0}1)<0.29$ for any sufficiently large $k$, and therefore does not converge to $0.3$. So $\{0,2\}^{-\NN}\subset \mathcal{D}$. On the other hand, any point $x$ satisfying $\ell(x)\in N_{0}\cup N_{1}\cup N_{2}$ is trivially continuous, and any point $x$ satisfying $\ell(x)\in \NN\setminus\{N_{0}\cup N_{1}\cup N_{2}\}$ is continuous since for any $k>l$ and any $y\in \{0,1,2\}^{-\mathbb{N}}$, 
\[
g(\ldots y_{-2}y_{-1}y_{0}x_{-k}^{0}1)=0.26+\sum_{i=1}^{k-l}\theta_{i}x_{-l-i}+\sum_{i\geq k-l+1}\theta_{i}y_{i-k+l-1}
\]
which converges to $0.26+\sum_{i\geq 1}\theta_{i}x_{-l-i}$.
So $\mathcal{D}=\{0,2\}^{-\NN}$ (which is uncountable), $|\mathcal{D}^{n}|=2^{n}$ and consequently $\bar{gr}({\mathcal{D}})=2$. Observe on the other hand that $\inf_{X}g=0$, but there exists $N$ such that $\inf_{\mathcal{E}_{N}}g\geq 0.26$ (any $N>\max(N_{0}\cup N_{1}\cup N_{2})$ will do the job). Thus, the hypothesis of Corollary \ref{coro} are  fulfilled since $1-(|A|-1)\varepsilon=0.48<1/2$, and existence holds.}
\end{Ex}

So far, we have focussed on the existence issue. However, \cite{bramson/kalikow/1993} proved that even regular $g$-measures (continuous $g$-measures satisfying {\bf (H1')}) might have several  $g$-measures. In view of a result on uniqueness for non-regular $g$-measures, we now give a condition on the set of continuous pasts $X\setminus\mathcal{D}$. To do so, we use the notion of context tree defined below. 
\begin{Def}A \emph{context tree} $\tau$ on $A$ is a subset of $\cup_{k\ge0}A^{\{-k,\ldots,0\}}\cup X$ such that for any $x\in X$, there exists a unique element $v\in\tau$ satisfying $a_{-|v|+1}^{0}=v_{-|v|+1}^{0}$.
\end{Def}
For any $g$-function $g$, we denote by $\tau^{g}$ the smallest  context tree  containing $\mathcal{D}$, called the \emph{skeleton} of $g$.
For instance, coming back to example \ref{comb}, $\tau^{\tilde{g}}=\cup_{i\ge0}\{10^{i}\}\cup\{0_{-\infty}^{0}\}$ and is represented on Figure \ref{fig:peigne}. It is also the skeleton of any $g$-function having only $0_{-\infty}^{0}$ as discontinuity point, such as the $g$-function introduced in Example \ref{exITA}. Pictorially, any $g$-function can be represented as a set of transition probabilities associated to each leaf of the complete tree $A^{-\mathbb{N}}$ and $\tau^{g}$ is the smallest subtree of $A^{-\mathbb{N}}$ which contains $\mathcal{D}$, such that every node has either $|A|$ or $0$ sons. On Figure \ref{fig:arvoregeral} is drawn the (upper part of) the complete tree corresponding to some function $g$ having complicated sets $\mathcal{D}$ and $\tau^{g}$.   \\
\begin{figure}
\centering
\includegraphics[scale=0.9]{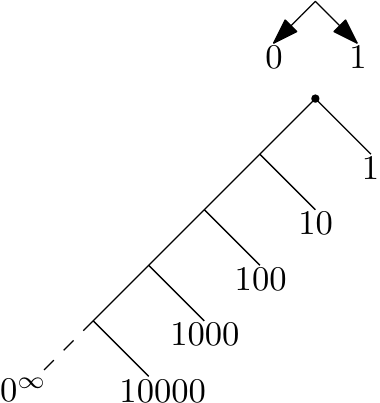}
\caption{The skeleton of the function $\tilde{g}$.}
\label{fig:peigne}
\end{figure}
\begin{figure}
\centering
\includegraphics[scale=1.5]{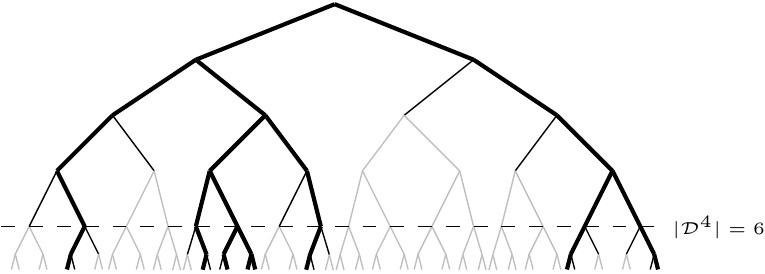}
\caption{An example of set $\mathcal{D}$ (bold black line) for some $g$-function $g$. The black lines represent the context tree $\tau$ corresponding to $\mathcal{D}$ (skeleton of $g$), and the grey lines represent the remaining complete tree. The branches that are not bold black are continuous points for $g$. We can see that $|\mathcal{D}^{1}|=2$, $|\mathcal{D}^{2}|=3$, $|\mathcal{D}^{3}|=4$, $|\mathcal{D}^{4}|=6$, $|\mathcal{D}^{5}|=7$, $|\mathcal{D}^{6}|=8$, ...}
\label{fig:arvoregeral}
\end{figure}

Let us introduce the $n$-variation of a point $x\in X$ that quantifies the rate of continuity of $g$ as
\[
\textrm{var}_{n}(x):=\sup_{y_{-n}^0=x_{-n}^0}|g(y)-g(x)|.
\]
Notice that $\textrm{var}_{n}(x)$ converges to $0$ if and only if $x$ is a continuity point for $g$. As $\textrm{var}_{n}(x)$ actually only depends on $x_{-n}^{0}$, the notation $\textrm{var}_{n}(x_{-n}^{0})$ will sometimes be used.
Now, observe that the set of continuous pasts of a given $g$-function $g$ is the set of pasts $x_{-\infty}^{0}$ such that there exists $v\in\tau^{g}$, $|v|<+\infty$ with $x_{-|v|+1}^{0}=v_{-|v|+1}^{0}$. In particular,  for any $v\in \tau^{g}$ with $|v|<+\infty$,
\[
\textrm{var}^{v}_{n}:=\sup_{x,x_{-|v|+1}^{0}=v}\textrm{var}_{n}(x)\stackrel{n\rightarrow+\infty}{\longrightarrow}0.
\]
For any $v\in\tau^{g}$, $|v|<+\infty$, let $R_{v}:=\sum_{n\ge|v|}[\textrm{var}^{v}_{n}]^2$.
Our assumption on the set of continuous pasts $X\setminus\mathcal{D}$ is 
\begin{equation*}
{\bf (H4)}\,\,\,\sum_{v\in\tau^{g},|v|<\infty}\mu(v)R_{v}<+\infty.
\end{equation*}
Observe that {\bf (H4)} implies that $R_{v}<+\infty$ for any $v\in\tau$.

\begin{Theo}\label{theo:ExistsUniqueMixing}
Suppose that we are given a $g$-function $g$ satisfying {\bf (H1)}, {\bf (H2)} and {\bf (H4)}, then there exists a unique  $g$-measure for $g$.
\end{Theo}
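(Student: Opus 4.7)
Existence is provided directly by Theorem \ref{existence}, which moreover tells us that any $g$-measure $\mu$ for $g$ is concentrated on $X\setminus\D$. The remaining task is uniqueness, and the plan is to adapt the approach of Johansson-Öberg \cite{JO}, whose theorem gives uniqueness for continuous, non-null $g$-functions satisfying the uniform $\ell^2$ condition $\sum_n V_n^2<+\infty$, where $V_n:=\sup_x\textrm{var}_n(x)$. In our setting we have neither global continuity nor such a uniform bound, but hypothesis {\bf (H4)} provides a measure-weighted substitute.

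The main idea is to decompose the state space along the skeleton $\tau^g$: since $\mu(\D)=0$, for $\mu$-a.e.\ $x\in X$ there is a unique finite context $v(x)\in\tau^g$ with $x_{-|v(x)|+1}^{0}=v(x)$, and on the cylinder $[v]$ the $g$-function is continuous with variations bounded by $\textrm{var}_n^v$. The JO proof proceeds by a telescoping estimate of a Hellinger- (or $\chi^2$-) type distance $d_n$ between the finite-dimensional marginals of two candidate $g$-measures $\mu_1,\mu_2$ on the coordinates $\{-n,\ldots,0\}$; their key bound takes the form
\[
d_n(\mu_1,\mu_2)\leq C\sum_{k\geq 0}V_k^2.
\]
In our setting we replace the uniform $V_k$ by the local $\textrm{var}_k^{v(x)}$ on the event $\{x\in[v(x)]\}$, and integrating against $\mu$ turns the right-hand side into $C\sum_{v\in\tau^g,|v|<\infty}\mu(v)R_v$, which is finite by {\bf (H4)}. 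Taking $n\to\infty$, the distance $d_n(\mu_1,\mu_2)$ vanishes, forcing $\mu_1=\mu_2$.

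The main obstacle is the technical adaptation of the JO telescoping identity to our non-uniform, measure-dependent setting, especially because non-nullness is only guaranteed on $\E_N$ (hypothesis {\bf (H1)}) rather than globally. Since $\E_N$ contains all relevant neighborhoods of discontinuities and $\mu$ is supported away from $\D$, a shift by $N$ coordinates places the JO quantities on a set where $g\geq\varepsilon$, so the denominators appearing in the Hellinger estimates remain bounded below. The bookkeeping that aligns the context decomposition, the $N$-shift, and the Hellinger telescoping is the core technical step; once done, the JO conclusion carries over and yields $\mu_1=\mu_2$.
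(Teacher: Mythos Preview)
Your sketch has the right overall architecture but contains a genuine gap, and it is also more laborious than necessary.

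The paper does not redo the Johansson--\"Oberg telescoping; it simply invokes their integral criterion (Remark~2 in \cite{JO}): uniqueness follows once one checks
\[
\int_X \sum_{n\ge0}\bigl[\textrm{var}_n(x)\bigr]^2\,\mu(dx)<+\infty
\]
for some $g$-measure $\mu$. The whole proof then reduces to verifying this integral is finite, and no non-nullness issue arises at all --- your $N$-shift argument is unnecessary.

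The verification splits the inner sum according to whether $x_{-n}^{0}\in\D^{n+1}$ or not. For $x_{-n}^{0}\notin\D^{n+1}$ one rearranges along the skeleton exactly as you suggest and obtains $\sum_{v\in\tau^g,\,|v|<\infty}\mu(v)R_v$, finite by {\bf (H4)}. The other part --- the contribution from depths $n$ with $x_{-n}^{0}\in\D^{n+1}$, i.e.\ from $n<|v(x)|$ --- is the piece your argument omits. On that set $\textrm{var}_n(x)$ need not be small (it can be $1$), so bounding it by $\textrm{var}_n^{v(x)}$ is illegitimate, and the resulting integral is $\sum_n\mu(\C_{n+1}(\D))$. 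The fact $\mu(\D)=0$ alone does \emph{not} make this series converge; one needs the quantitative decay
\[
\mu(\C_{n+1}(\D))=\mu\bigl(L^{n+1}\1_{\C_{n+1}(\D)}\bigr)\le\sum_{C\in\C_{n+1}(\D)}\sup_C g_{n+1}\le e^{(n+1)(P_g(\D)+\delta)},
\]
which is where {\bf (H2)} is used a second time, beyond mere existence. Your claim that ``integrating against $\mu$ turns the right-hand side into $C\sum_{v}\mu(v)R_v$'' is therefore incorrect as stated: there is an additional term $\sum_n\mu(\C_{n+1}(\D))$ that must be controlled separately via the negative pressure.
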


\begin{Rem}
{In this theorem, hypothesis {\bf (H1)} and {\bf (H2)} are mainly used to get the existence of a $g$-measure. Therefore, thanks to Corollary \ref{coro}, the same conclusion holds either assuming {\bf (H1')}, {\bf (H2')} and {\bf (H4)} or {\bf (H1)}, {\bf (H2')}, {\bf (H3)} and {\bf (H4)}.}
\end{Rem}

This result is to be compared to the results of \cite{JO}, which state, in particular, that uniqueness holds when $\textrm{var}_{n}:=\sup_{x_{-n}^{0}}\textrm{var}_{n}(x_{-n+1}^{0})$ is in $\ell^{2}$. In fact, this is mainly what is assumed here, but only on the set of continuous pasts, which has full $\mu$-measure. This is formalised through the more complex hypothesis {\bf (H4)}. We now come back to Examples \ref{exITA} and \ref{ex:3lettres} in order to illustrate Theorem \ref{theo:ExistsUniqueMixing}.

\begin{Exe}[Continued]
\emph{
In this example, we have as skeleton $\tau^{g}=0_{-\infty}^{0}\cup_{i\geq 0}\{10^{i}\}$, so that any $v\in\tau^{g}$ with $|v|=k<\infty$ writes $v=10^{k-1}$ and simple calculations yield, for any $n\geq k$
\[
\textrm{var}^{v}_{n}=(1-2\epsilon)\sum_{i\geq n-k+1}q^{k}_{i}.
\]
Hypothesis {\bf (H4)} is satisfied as soon as
\[
\sum_{k\geq 1}(1-\epsilon)^{k}\sum_{n\geq k+1}\left[\sum_{i\geq n-k+1}q^{k}_{i}\right]^{2}<+\infty.
\]
For instance, if for any $k\geq 1$, $(q^{k}_{i})_{i\ge1}$ is the geometric distribution with parameter $\alpha^{k}$, where $1-\epsilon<\alpha<1$, then
\[
\sum_{k\geq 1}(1-\epsilon)^{k}\sum_{n\geq 1}\left[\sum_{i\geq n+1}q^{k}_{i}\right]^{2}\leq \sum_{k\geq 1}[(1-\epsilon)\alpha^{-1}]^{k},
\]
which is summable. So we have uniqueness for this kernel. 
}
\end{Exe}
\begin{Exem}[Continued]
\emph{
The skeleton of $g$ is 
$$
\tau^{g}=\{0,2\}^{-\mathbb{N}}\cup\{1\}\cup \bigcup_{i\geq 0}\bigcup_{x_{-i}^0\in\{0,2\}^{i+1}}\{1x_{-i}^{0}\}
$$
and for any $v\in\tau^{g}$, $|v|<\infty$, 
\[
\textrm{var}^{v}_{n}\leq2\sum_{i\geq n-|v|}\theta_{i}\,,\,\,\forall n>|v|.
%
\] 
Since this upper bound does not depend on the length of the string $|v|$, it follows that Hypothesis {\bf (H4)} is  satisfied if $\sum_{n\geq 1}\left[\sum_{i\geq n}\theta_{i}\right]^{2}<+\infty$.}
\end{Exem}

\section{Proof of Theorem \ref{existence}}

Let us define the Perron Frobenius operator $L$ acting on measurable functions $f$ as follows:
$$
Lf(x)=\sum_{a\in A}g(xa)f(xa)=\sum_{x=T(y)}g(y)f(y)
$$
For $\mu\in\M$, let $L^*$ denote the dual operator, that is 
$$
L^*\mu(f)=\mu(Lf)
$$
for any $f\in\B$. The relation between $L^*$ and the $g$-measures is enlightened by the following result.

\begin{Prop}\label{Ledrappier}(\cite{Ledrappier})
$\mu$ is a g-measure if and only if $\mu$ is a probability measure and $L^*\mu=\mu$
\end{Prop}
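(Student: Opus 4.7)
The plan is to verify the equivalence directly from the definitions, exploiting the decomposition of any point $y\in X$ as $y = xa$ with $x=T(y)$ and $a=y_{0}\in A$, which gives a canonical decomposition $f(y)=f_{y_{0}}(T(y))$ of any bounded measurable $f$, where $f_{a}(x):=f(xa)$.

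For the forward implication, I assume $\mu$ is a $g$-measure. For any bounded measurable $f$, I write
$$
\mu(f) \;=\; \sum_{a\in A}\int \1_{\{y_{0}=a\}}\, f_{a}(T(y))\, d\mu(y).
$$
Since $f_{a}\circ T$ is $\F_{1}$-measurable, I can pull it out of the conditional expectation. Applying the defining property of a $g$-measure, $\EE_{\mu}(\1_{\{y_{0}=a\}}|\F_{1})(y)=g(T(y)a)$, and then using the $T$-invariance of $\mu$ to push $T$ off the integrand, I obtain
$$
\mu(f)\;=\;\sum_{a\in A}\int g(T(y)a)\,f_{a}(T(y))\,d\mu(y)\;=\;\int \sum_{a\in A} g(xa)\,f(xa)\,d\mu(x)\;=\;\mu(Lf),
$$
which is exactly $L^{*}\mu=\mu$.

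For the reverse implication, I assume $\mu$ is a probability measure satisfying $L^{*}\mu=\mu$. I first recover $T$-invariance for free from the normalisation \eqref{g-function}: for any bounded measurable $f$,
$$
L(f\circ T)(x)\;=\;\sum_{a\in A} g(xa)\, f(T(xa))\;=\;f(x)\sum_{a\in A} g(xa)\;=\;f(x),
$$
so $\mu(f\circ T)=\mu(L(f\circ T))=\mu(f)$. To check the conditional expectation identity, I fix $a\in A$ and a bounded $\F$-measurable $\phi$, and take $f(y):=\1_{\{y_{0}=a\}}\phi(T(y))$. Then $Lf(x)=g(xa)\phi(x)$, so $L^{*}\mu=\mu$ yields $\int \1_{\{y_{0}=a\}}\phi(T(y))\,d\mu = \int g(xa)\phi(x)\,d\mu$. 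Using $T$-invariance on the right-hand side gives $\int g(T(y)a)\phi(T(y))\,d\mu(y)$, and since $g(T(\cdot)a)$ is $\F_{1}$-measurable and $\phi\circ T$ ranges over all bounded $\F_{1}$-measurable functions as $\phi$ varies, the characterisation of conditional expectation gives $\EE_{\mu}(\1_{\{y_{0}=a\}}|\F_{1})(y)=g(T(y)a)$ $\mu$-a.s., which is the $g$-measure property.

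There is no real obstacle here; the only point requiring slight care is that $\F_{1}=T^{-1}\F$, so every $\F_{1}$-measurable test function is of the form $\phi\circ T$, which is what legitimises identifying the conditional expectation in the last step. The rest is bookkeeping built on the identity $\sum_{a} g(xa)=1$ and $T$-invariance.
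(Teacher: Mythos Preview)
Your argument is correct. The paper does not prove this proposition at all: it is stated with attribution to \cite{Ledrappier} and used as a black box, so there is no proof in the paper to compare against. Your direct verification via the decomposition $f(y)=f_{y_0}(T(y))$ and the Doob--Dynkin factorisation of $\F_1$-measurable functions as $\phi\circ T$ is the standard route and goes through without issue; in particular, the recovery of $T$-invariance from $L^*\mu=\mu$ via $L(f\circ T)=f$ is exactly the right observation to make the reverse implication work.
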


In view of Proposition \ref{Ledrappier}, the strategy of the proof will be to find a fixed point for $L^*$.
When $g$ is a continuous function, the operator $L$ acts on $\C$ and $L^*$ acts on $\M$, the existence of a g-measure $\mu$ is then a straightforward consequence of the Schauder-Tychonoff theorem.

If $g$ is not continuous, $L$ does not preserve the set of continuous functions. More precisely, if $\D$ is the set of discontinuities of $g$ and $f$ is continuous, then the set of discontinuities of $Lf$ is $T\D$. Still, as $g$ is bounded, $L$ acts on the space $\B$ of bounded functions. More precisely $\Vert Lf\Vert\le\Vert f\Vert$, where $\Vert\ .\ \Vert$ is the uniform norm.
Therefore $L^*$ acts on $\B'$, the topological dual space of $\B$ i.e.
$$
L^*\alpha(f)=\alpha(Lf)
$$
for all $\alpha\in \B'$ and $f\in \B$.

Firstly, the existence of a fixed point $\Lambda\in{\B}'$ for $L^*$ will be proved. Then the hypothesis  {\bf (H1)} and {\bf (H2)} will be shown to imply $\mu({\D})=0$ and $\mu(T{\D})=0$, where $\mu$ is the restriction of $\Lambda$ to the continuous functions. Finally, we will use these two equalities to prove that $\mu$ is indeed a $g$-measure.

\begin{Prop}
There exists a positive functional $\Lambda\in \B'$ with $\Lambda(\1)=1$ such that $L^*\Lambda=\Lambda$.
\end{Prop}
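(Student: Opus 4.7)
The plan is a standard Krylov--Bogolyubov / Markov--Kakutani weak-$*$ fixed point argument, adapted from $\M$ to the larger dual space $\B'$ in order to cope with the fact that $L$ no longer preserves $\C$.

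First I would set
$$
K=\{\alpha\in\B':\alpha(f)\ge 0\text{ whenever }f\ge 0,\ \alpha(\1)=1\}.
$$
Every $\alpha\in K$ satisfies $|\alpha(f)|\le \alpha(\1)\,\|f\|=\|f\|$, so $K$ is contained in the closed unit ball of $\B'$, which is weak-$*$ compact by Banach--Alaoglu. Since $K$ is defined by the weak-$*$ closed conditions $\alpha(f)\ge 0$ (one condition for each non-negative $f\in\B$) and $\alpha(\1)=1$, it is weak-$*$ closed, hence weak-$*$ compact; it is also convex and non-empty (it contains, e.g., the Dirac mass at any point of $X$).

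Second I would check that $L^*$ maps $K$ into $K$. Because $g\ge 0$, the operator $L$ is positive on $\B$: if $f\ge 0$ then $Lf(x)=\sum_{a\in A}g(xa)f(xa)\ge 0$. So for $\alpha\in K$ and $f\ge 0$, $L^*\alpha(f)=\alpha(Lf)\ge 0$. Moreover the $g$-function identity $\sum_{a\in A}g(xa)=1$ gives $L\1=\1$, whence $L^*\alpha(\1)=\alpha(L\1)=\alpha(\1)=1$. Thus $L^*(K)\subset K$. The map $L^*$ is weak-$*$ continuous on $\B'$: if $\alpha_\lambda\to\alpha$ weak-$*$, then for every $f\in\B$, $L^*\alpha_\lambda(f)=\alpha_\lambda(Lf)\to \alpha(Lf)=L^*\alpha(f)$ since $Lf\in\B$.

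Third, having $L^*$ a weak-$*$ continuous (in fact affine) self-map of the non-empty, convex, weak-$*$ compact set $K$, I would invoke either the Schauder--Tychonoff theorem or the Markov--Kakutani theorem applied to the single map $L^*$ to produce a fixed point $\Lambda\in K$, i.e.\ a positive functional with $\Lambda(\1)=1$ and $L^*\Lambda=\Lambda$. Equivalently, one can argue by Cesàro averaging: pick any $\alpha_0\in K$, form $\alpha_n=\tfrac{1}{n}\sum_{k=0}^{n-1}(L^*)^k\alpha_0\in K$, extract a weak-$*$ cluster point $\Lambda$, and observe that $L^*\alpha_n-\alpha_n=\tfrac{1}{n}((L^*)^n\alpha_0-\alpha_0)\to 0$ weak-$*$ because this difference is uniformly bounded on bounded subsets of $\B$ and in fact $|((L^*)^n\alpha_0-\alpha_0)(f)|\le 2\|f\|$.

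There is no real obstacle here: the only subtlety is that one must work in $\B'$ (with the weak-$*$ topology) rather than in $\M$, because $L$ does not preserve $\C$ when $g$ is discontinuous, and consequently $L^*$ does not obviously map probability measures to probability measures. The genuine work will come afterwards, in using \textbf{(H1)} and \textbf{(H2)} to show that the restriction of $\Lambda$ to $\C$ extends to a probability measure $\mu$ on $X$ with $\mu(\D)=\mu(T\D)=0$, so that $\mu$ is an actual $g$-measure.
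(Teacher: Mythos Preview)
Your argument is correct and follows essentially the same route as the paper: you define the same set $K$ (called $C$ there), use Banach--Alaoglu for weak-$*$ compactness, check $L^*$ is weak-$*$ continuous and maps $K$ to itself, and conclude via Schauder--Tychonoff. The paper is terser (it cites \cite{BPS} for continuity and omits the verification $L^*(K)\subset K$), while you spell these out and add the optional Markov--Kakutani/Ces\`aro alternatives, but the substance is identical.
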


\begin{proof}

Consider the following subset $C$ of $\B'$
$$
C=\{\alpha\in \B', \alpha(\1)=1\ \ \mbox{and}\ \ \alpha(f)\geq 0\ \ \mbox{for all}\ \ f\geq 0\}.
$$
We consider the weak star topology on $\B'$ and $C$. In order to apply Schauder-Tychonoff theorem (\cite{Dunford-Schwartz} V.10.5), it is needed that $L^*$ is well defined and continuous for the weak star topology, that $C$ is compact for this topology, non empty and convex (the two last properties are straightforward). The continuity of $L^*$ is given by a simplification of the proof in \cite{BPS}. The compactness of $C$ follows from Banach-Alaoglu theorem (\cite{Dunford-Schwartz} V.4.2), as $C$ is a closed subset of the unit ball of $\B'$.
\end{proof}

Since $\Lambda_{\mid \C}$ is a positive linear form on $\C$, the Riesz
representation theorem implies that there exists $\mu$, a positive Borel
measure, such that:
$$\forall f\in \C:\ \ \Lambda(f)=\mu(f).$$
In particular, $\mu(\1)=\Lambda(\1)=1$ so that
$\mu$ is a probability measure.

For all $f\in \C$,
$\Lambda(Lf)=\Lambda(f)=\mu(f)$. But $Lf$ is not
necessarily continuous at points
of $T\D$. Notice though that if $f\in\C$ and $Lf\in\C$ then $\mu(f)=\mu(Lf)$. What remains to prove is that this  is true for any $f\in\C$. 

Two more lemmas are needed to go on further in the proof.

\begin{Lem}
$$
P_g(T\D)\le P_g(\D)
$$
\end{Lem}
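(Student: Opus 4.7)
My plan is to establish
$$Z_n(T\D) \;\le\; \varepsilon^{-1}\, Z_{n+1}(\D) \quad \text{for all } n \ge N,$$
where $Z_n(S) := \sum_{B \in \C_n,\, B \cap S \neq \emptyset} \sup_B g_n$ and $\varepsilon, N$ are the constants of \textbf{(H1)}. Taking $\tfrac{1}{n}\log$ and passing to $\limsup_n$, the prefactor $\varepsilon^{-1}$ contributes $0$ and the index shift $n \mapsto n+1$ is harmless, yielding $P_g(T\D) \le P_g(\D)$.

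The key construction is as follows. For each $B \in \C_n$ with $B \cap T\D \neq \emptyset$, pick a witness $y = y(B) \in \D$ such that $T(y) \in B$, and set $B^{\uparrow} := C_{n+1}(y(B))$. Then $B^{\uparrow} \in \C_{n+1}$ intersects $\D$ (it contains $y$). Observing that $B$ has coordinates $y_{-n}^{-1}$ (namely those of $T(y)$) while $B^{\uparrow}$ has coordinates $y_{-n}^0$, the cylinder $B$ is recovered from the first $n$ coordinates of $B^{\uparrow}$; hence the assignment $B \mapsto B^{\uparrow}$ is injective into $\{B' \in \C_{n+1} : B' \cap \D \neq \emptyset\}$.

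To compare $\sup_B g_n$ with $\sup_{B^{\uparrow}} g_{n+1}$, I would use the multiplicative identity $g_{n+1}(z) = g(z)\, g_n(T(z))$ together with the fact that $T|_{B^{\uparrow}} \colon B^{\uparrow} \to B$ is surjective (indeed, any $w \in B$ equals $T(z)$ for the unique $z \in B^{\uparrow}$ with $z_0 = y_0$). Combined with the elementary bound $\sup(fh) \ge (\inf f)(\sup h)$ for non-negative $f, h$, this gives
$$\sup_{B^{\uparrow}} g_{n+1} \;\ge\; \inf_{B^{\uparrow}} g \;\cdot\; \sup_B g_n.$$
Hypothesis \textbf{(H1)} now enters: since $z \in B^{\uparrow}$ forces $z_{-n}^{-1} = y_{-n}^{-1}$ with $y \in \D$, one has $B^{\uparrow} \subseteq \E_n$; together with the monotonicity $\E_{n+1} \subseteq \E_n$, for $n \ge N$ we obtain $B^{\uparrow} \subseteq \E_N$ and thus $\inf_{B^{\uparrow}} g \ge \varepsilon$. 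Summing the resulting bound $\sup_B g_n \le \varepsilon^{-1} \sup_{B^{\uparrow}} g_{n+1}$ over all $B \in \C_n$ intersecting $T\D$ and invoking injectivity of $B \mapsto B^{\uparrow}$ yields the target estimate.

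The main obstacle is the absence of a global lower bound on $g$. The naive identity $\sum_{a \in A} g(\cdot a) = 1$ only produces $\sup_B g_n \le \sum_{a \in A} \sup_{B_a} g_{n+1}$, a sum over all $|A|$ cylinders above $B$, many of which may not meet $\D$; without any positivity assumption on $g$ one cannot discard or bound those extra "sibling" cylinders by $\D$-cylinders. Hypothesis \textbf{(H1)} is tailor-made to overcome this: it provides uniform positivity of $g$ precisely on $\E_n$ (for $n \ge N$), which contains all the selected $B^{\uparrow}$'s, letting us trade the sum over $|A|$ siblings for one well-chosen sibling at the cost of the multiplicative factor $\varepsilon^{-1}$.
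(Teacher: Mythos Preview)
Your proof is correct and follows essentially the same route as the paper: lift each $B\in\C_n$ meeting $T\D$ to an $(n+1)$-cylinder $B^{\uparrow}=C$ meeting $\D$ via a witness $y\in\D$ with $T(y)\in B$, use $g_{n+1}(z)=g(z)\,g_n(T(z))$ together with $B^{\uparrow}\subset\E_n$ and {\bf(H1)} to obtain $\sup_B g_n\le\varepsilon^{-1}\sup_{B^{\uparrow}} g_{n+1}$, sum, and pass to the $\limsup$. Your write-up is in fact slightly more careful than the paper's, making explicit the injectivity of $B\mapsto B^{\uparrow}$ and the surjectivity of $T|_{B^{\uparrow}}$ onto $B$, both of which the paper uses implicitly.
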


\begin{proof}
By definition:
 $$
   P_{g}(T\D)=\limsup_{n\to\infty}\frac{1}{n}\log
      \sum_{{B\in\C_{n}}\atop{B\cap T\D\neq\emptyset}}\sup_{B}g_{n}.
 $$
Let $B\in\C_{n}$ such that $B\cap T\D\neq\emptyset$, there
exists $C\in \C_{n+1}$ such
that $C\cap\D\neq\emptyset$. More precisely, there exists $a\in A$ such that $C=C_1(a)\cap T_a^{-1}(B)$.
Moreover, let $x\in B$, then $T_a^{-1}(x)\in\E_n$ and
 $$
g_{n}(x)\le\frac{g_{n+1}(T_{a}^{-1}(x))}{g(T_{a}^{-1}(x))}\le\frac{1}{\inf_{\E_n}
g}\sup_{C}g_{n+1}.
 $$
Since $\E_{n+1}\subset \E_n$,  $\sup_{B}g_{n}\le\frac{1}{\inf_{\E_N} g}\sup_{C}g_{n+1}$ for $n\geq N$. Recall that $\inf_{\E_N} g>0$ by
hypothesis {\bf(H1)}. It comes, for $n\geq N$:
 $$
   \sum_{{B\in\C_{n}}\atop{B\cap T\D\neq\emptyset}}\sup_{B}g_{n}\le\frac{1}{\inf_{\E_N} g}
\sum_{{C\in\C_{n+1}}\atop{C\cap\D\neq\emptyset}}\sup_{C}g_{n+1}
 $$
and thus:
\begin{eqnarray*}
\limsup_{n\to\infty}\frac{1}{n}\log\sum_{{B\in\C_{n}}\atop{
B\cap T\D\neq\emptyset}}\sup_{B}g_{n} \le
\lim_{n\to\infty}\frac{1}{n}\log\frac{1}{\inf_{\E_N}
g}+\limsup_{n\to\infty}\frac{1}{n+1}\log
\sum_{{C\in\C_{n+1}}\atop{C\cap\D\neq\emptyset}}\sup_{C}g_{n+1}.
\end{eqnarray*}

\end{proof}

\begin{Lem}\label{regular}
For all borel sets B,
$$
\mu(B)\le\inf\{\Lambda(O),\ O\ \mbox{open}, \,O\supset B\}
$$
\end{Lem}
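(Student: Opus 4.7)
The target inequality compares a Borel measure value $\mu(B)$ with the functional value $\Lambda(\1_O)$ for open sets $O\supset B$, so the strategy splits naturally into two stages: first reduce to the case $B=O$ open via outer regularity of $\mu$, then compare $\mu(O)$ with $\Lambda(\1_O)$ for an arbitrary open $O$ by exploiting the clopen structure of $X$.

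For the regularity reduction, since $A$ is finite, $X=A^{-\NN}$ is a compact metrisable space, and $\mu$ is a finite Borel probability measure on $X$ obtained via the Riesz representation theorem. Such measures on a compact metric space are automatically outer regular, so $\mu(B)=\inf\{\mu(O):O\supset B,\ O\text{ open}\}$. It therefore suffices to prove $\mu(O)\le \Lambda(\1_O)$ for every open $O$.

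To handle an open set $O$, I would use the fact that each cylinder $C_n(x)$ is clopen and that these cylinders form a countable basis of the topology of $X$. For each $n\geq 0$, define
\[
O_n=\bigcup\{C_n(y):y\in O,\ C_n(y)\subset O\},
\]
which is a finite union of length-$n$ cylinders and is therefore clopen. The sequence $(O_n)_{n\geq 0}$ is increasing: if $x\in O_n$, say $x\in C_n(y)\subset O$, then $C_{n+1}(x)\subset C_n(x)=C_n(y)\subset O$, hence $x\in O_{n+1}$. Moreover $O=\bigcup_n O_n$, because any $x\in O$ admits some $n$ with $C_n(x)\subset O$, as the cylinders around $x$ form a neighbourhood basis. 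In particular $\1_{O_n}\in\C$ and $\1_{O_n}\uparrow \1_O$ pointwise.

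The conclusion then follows from three ingredients: monotone convergence for the measure $\mu$ yields $\mu(O_n)\uparrow \mu(O)$; the Riesz identification $\Lambda_{|\C}=\mu$ gives $\mu(O_n)=\Lambda(\1_{O_n})$ since $\1_{O_n}$ is continuous; and the positivity of $\Lambda$ on $\B$ implies $\Lambda(\1_{O_n})\le \Lambda(\1_O)$, because $\1_O-\1_{O_n}\geq 0$. Combining these three facts gives $\mu(O)\le \Lambda(\1_O)$, and composing with outer regularity of $\mu$ finishes the proof. No serious obstacle is expected; the only subtle point is that $\Lambda$ itself is merely a positive linear functional on $\B$ and is not claimed to be $\sigma$-additive, so the limit transfer from $O_n$ to $O$ must be carried out on the measure side, using only monotonicity of $\Lambda$ on the functional side.
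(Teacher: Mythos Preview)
Your proof is correct, but the second stage differs from the paper's. After the same outer-regularity reduction, the paper argues via \emph{inner} regularity: given an open $O$ and $\varepsilon>0$, it picks a compact $K_\varepsilon\subset O$ with $\mu(O)<\mu(K_\varepsilon)+\varepsilon$, then uses a Urysohn-type continuous function $f_\varepsilon:X\to[0,1]$ with $f_\varepsilon=1$ on $K_\varepsilon$ and $f_\varepsilon=0$ on $O^c$, so that $\mu(O)-\varepsilon<\mu(K_\varepsilon)\le\mu(f_\varepsilon)=\Lambda(f_\varepsilon)\le\Lambda(\1_O)$. Your argument instead exploits the zero-dimensional structure of $X=A^{-\NN}$: approximating $O$ from inside by clopen finite unions of cylinders $O_n$ lets you use the indicator $\1_{O_n}$ itself as the continuous test function, avoiding Urysohn entirely. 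Your route is more elementary and tailored to the symbolic setting; the paper's route is the generic compact-Hausdorff argument and would transfer unchanged to a space without a clopen basis. Both handle the key subtlety you flagged---that $\Lambda$ is only finitely additive---in the same way, by passing to the limit on the $\mu$-side and using only monotonicity of $\Lambda$.
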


\begin{proof}
Since $\mu$ is a regular measure (as a Borel measure on a compact set):
 $$
  \mu(B)=\inf\{\mu(O),\ O\ \mbox{open},\ O\supset B\}.
 $$
Let us fix an open set $O$ and show that: $\mu(O)\le\Lambda(O)$, this will
prove the lemma.
Take
$\varepsilon>0$. Using again the regularity of $\mu$, there exists
$K_{\varepsilon}$, a compact
subset of $O$, such that:
 $$
  \mu(O)<\mu(K_{\epsilon})+\varepsilon.
 $$
Let $f_{\varepsilon}:X\to[0,1]$ be continuous and such that:
$$\left\{\begin{array}{l}
f_{\varepsilon}=1\ \ in\ \ K_{\varepsilon}\\
f_{\varepsilon}=0\ \ in\ \ O^{c}\\
f_{\varepsilon}\le 1\ \ in\ \ O\setminus K_{\varepsilon}.
\end{array}
\right.$$
On one hand, $f_{\varepsilon}\le \1_{O}$ so that
 $$
 \mu(f_{\varepsilon})=\Lambda(f_{\varepsilon})\le\Lambda(O)
 \hbox{ and }
 \sup_{\varepsilon>0}\mu(f_{\varepsilon})\le\Lambda(O).
 $$
On the other hand,
$\mu(f_{\varepsilon})\geq\mu(K_{\varepsilon})>\mu(O)-\varepsilon$ so that:
 $$
  \mu(O) < \mu(K_\varepsilon)+ \varepsilon \leq \mu(f_\varepsilon) +
\varepsilon
 $$
and $\mu(O)\leq\sup_{\varepsilon>0}\mu(f_{\varepsilon})\leq\Lambda(O)$.
\end{proof}

Now, we claim the following:

\begin{Lem}\label{zero_measure}
$$\mu(\D)=0\ \ \mbox{and}\ \ \mu(T\D)=0.$$
\end{Lem}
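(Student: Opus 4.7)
The plan is, for each $n \ge 1$, to exhibit an open neighbourhood of $\D$ whose $\Lambda$-mass is bounded by the very cylindrical sum defining $P_g(\D)$, and then to let $n\to\infty$. The natural choice is $\C_n(\D) = \bigcup_{x\in\D} C_n(x)$: a union of $n$-cylinders, hence clopen, that contains $\D$. Lemma \ref{regular} then gives
\begin{equation*}
\mu(\D) \le \Lambda(\C_n(\D)) = \Lambda(\1_{\C_n(\D)}).
\end{equation*}

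The core step is to combine the dual invariance $\Lambda = L^*\Lambda$, iterated $n$ times, with a pointwise estimate. Writing $\Lambda(\1_{\C_n(\D)}) = \Lambda(L^n \1_{\C_n(\D)})$ and expanding gives
\begin{equation*}
L^n\1_{\C_n(\D)}(x) = \sum_{w\in A^n} g_n(xw)\,\1_{\C_n(\D)}(xw),
\end{equation*}
where $xw$ denotes the concatenation. The crucial observation is that $\1_{\C_n(\D)}(xw)$ depends only on $w$ and equals $1$ exactly when the $n$-cylinder $B_w = \{y : y_{-n+1}^0 = w\}$ meets $\D$; since $xw\in B_w$, one has $g_n(xw) \le \sup_{B_w} g_n$. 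Summing over $w$ yields the $x$-independent bound
\begin{equation*}
L^n\1_{\C_n(\D)}(x) \le \sum_{\substack{B\in\C_n\\ B\cap\D\neq\emptyset}} \sup_B g_n,
\end{equation*}
and because $\Lambda$ is positive with $\Lambda(\1)=1$, this bound transfers to $\Lambda$. By {\bf (H2)} the right-hand side decays exponentially in $n$, so letting $n\to\infty$ yields $\mu(\D)=0$.

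Running the identical scheme with $T\D$ in place of $\D$ produces
\begin{equation*}
\mu(T\D) \le \sum_{\substack{B\in\C_n\\ B\cap T\D\neq\emptyset}} \sup_B g_n,
\end{equation*}
which also tends to $0$, this time because the preceding lemma (whose proof invoked {\bf (H1)}) guarantees $P_g(T\D) \le P_g(\D) < 0$. The only delicate point in the whole argument is the pointwise expansion of $L^n\1_{\C_n(\D)}$: one must recognise that the indicator is a function of the last $n$ coordinates only, so that the $x$-dependent factors $g_n(xw)$ can be uniformly dominated by $\sup_B g_n$ on the appropriate cylinder. Everything else is bookkeeping.
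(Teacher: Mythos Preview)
Your proof is correct and follows essentially the same route as the paper: use $\C_n(\D)$ as the open neighbourhood, invoke Lemma~\ref{regular}, apply the fixed-point identity $\Lambda(\1_{\C_n(\D)})=\Lambda(L^n\1_{\C_n(\D)})$, bound pointwise by the cylindrical sum $\sum_{B\cap\D\neq\emptyset}\sup_B g_n$, and conclude via {\bf (H2)} (respectively the preceding lemma for $T\D$). Your write-up is in fact more explicit than the paper's about why the pointwise bound holds, which is a welcome clarification.
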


\begin{proof}
The claim will follow from Lemma \ref{regular} if we can find open neighborhoods $V$
of $\D$ and $W$ of $T\D$ with
$\Lambda(V)$ and $\Lambda(W)$ arbitrarily small. Let us write the proof for $\D$. The same scheme will work for $T\D$.

Recall that $\C_n(\D)=\cup\{C\in\C_{n}, C\cap\D\neq\emptyset\}$.
Using the fixed point property of $\Lambda$ and the definition of pressure, we get, for any $\delta>0$, $N(\delta)$
such that, for all $n>N(\delta)$:
 $$
  \Lambda(\C_n(\D))=\Lambda(L^n\1_{\C_n(\D)})
    \le\sum_{C\in \C_n(\D)}\sup_C g_{n}
    \le (e^{P_g(\D)+\delta})^n.
$$
Taking $\delta=-P_{g}(\D)/2$,
which
is positive by the main hypothesis {\bf (H2)}, we get $\lim_{n\to\infty}\Lambda(\C_n(\D))=0$
and for every $n$, $\C_n(\D)$ is an open neighbourhood of $\D$.
\end{proof}

Finally, the proof of the main theorem writes as follows :

\begin{proof}
Fix $f\in C(X)$ non-negative.

Since $\mu$ is regular (as a Borel measure on a compact set) and as $\mu(\D)=\mu(T\D)=0$ (lemma \ref{zero_measure}), for each $\varepsilon>0$, there exist $U_{\varepsilon}$ open neighbourhood of $\D$ and $V_{\varepsilon}$ open neighbourhood of $T\D$ such that $\mu(U_{\varepsilon})<\varepsilon$ and $\mu(V_{\varepsilon})<\varepsilon$. Let $W_{\varepsilon}=U_{\varepsilon}\cap T^{-1}V_{\varepsilon}$. This is also a neighbourhood of $\D$ such that $\mu(W_{\varepsilon})<\varepsilon$. Moreover, as $TW_{\varepsilon}\subset V_{\varepsilon}$, it comes $\mu(TW_{\varepsilon})<\varepsilon$.

Consider now $f_{\varepsilon}$ with compact support in $X\setminus \D$
such that:
$$\left\{\begin{array}{l}
f_{\varepsilon}=f\ \ in\ \ X\setminus W_{\varepsilon}\\
f_{\varepsilon}\le f\ \ in\ \ W_{\varepsilon}.
\end{array}
\right.$$
First, $Lf_{\varepsilon}$ is continuous on $X$. Namely, $f_{\varepsilon}$ is continuous on $X$ so $Lf_{\varepsilon}$ is on $X\setminus T\D$. Now, if $x\in T\D$, it may be easily checked that the potentially discontinuous part of $Lf_{\varepsilon}$ actually vanishes.
This continuity implies $\mu(Lf_{\varepsilon})=\mu(f_{\varepsilon})$ and
\begin{eqnarray*}
\vert\mu(Lf)-\mu(f)\vert
&=&\vert\mu(Lf_{\varepsilon})+\mu(L(f-f_{\varepsilon}))-\mu(f)\vert
\\
&=&\vert\mu(f_{\varepsilon}-f)+\mu(L(f-f_{\varepsilon}))\vert\\
&\le&\vert 2\Vert f\Vert\mu(W_{\varepsilon})+\mu(L(f-f_{\varepsilon}))\vert.
\end{eqnarray*}
We need to show that $\mu(L(f-f_{\varepsilon}))$ is small. By definition of $f_{\varepsilon}$,
$$
L(f-f_{\varepsilon})(x)=\sum_{a\in A}(f-f_{\varepsilon})(ax)g(ax)\1_{W_{\varepsilon}}(ax)
$$
therefore
\begin{eqnarray*}
\mu(L(f-f_{\varepsilon})) &\le& \Vert g\Vert\ \Vert f-f_{\varepsilon}\Vert\sum_{a\in A}\mu(\1_{W_{\varepsilon}}\circ T_a^{-1}) \\
&\le& 2\Vert g\Vert\ \Vert f\Vert \sum_{a\in A}\mu(T_a(W_{\varepsilon})) \\
&\le& (2\Vert g\Vert\ \Vert f\Vert |A|)\mu(TW_{\varepsilon}).
\end{eqnarray*}
Letting $\varepsilon$ go to zero gives $\mu(Lf)=\mu(f)$.
\end{proof}

\section{Proofs of Corollary \ref{coro} and Theorem  \ref{theo:ExistsUniqueMixing}} 

\begin{proof}[Proof of Corollary \ref{coro} using $\{{\bf (H1')},{\bf (H2')}\}$]
 In view of Theorem \ref{existence}, it is enough to show that hypothesis {\bf (H1')} and {\bf (H2')} imply {\bf (H2)}.
Under hypothesis {\bf (H1')}
\begin{equation}\label{eq1}
g_{n}(x)\leq (1-(|A|-1)\varepsilon)^{n}\,\,\textrm{for any}\,\,\,x.
\end{equation}
It follows that
\[
P_{g}(\mathcal{D})\leq  \limsup_{n\rightarrow+\infty}\frac{1}{n}\log |\mathcal{D}^{n}|(1-(|A|-1)\varepsilon)^{n}.
\]
Now, under {\bf (H2')}, there exists $\alpha\in(0,1)$ such that $|\mathcal{D}^{n}|\leq(\frac{1}{1-(|A|-1)\varepsilon})^{n(1-\alpha)}$ for any sufficiently large $n$. Thus,
\begin{align*}
P_{g}(\mathcal{D})&\leq\limsup_{n\rightarrow+\infty}\frac{1}{n}\log (1-(|A|-1)\varepsilon)^{-n(1-\alpha)}(1-(|A|-1)\varepsilon)^{n}\\&=\limsup_{n\rightarrow+\infty}\frac{1}{n}\log (1-(|A|-1)\varepsilon)^{n\alpha}\\&=\alpha\log (1-(|A|-1)\varepsilon)<0.
\end{align*}
\end{proof}
\begin{proof}[Proof of Corollary \ref{coro} using $\{{\bf (H1)},{\bf (H2')},{\bf (H3)}\}$]
 In view of Theorem \ref{existence}, it is enough to show that hypothesis  {\bf (H1)}, {\bf (H2')} and {\bf (H3)} imply {\bf (H2)}.
Under  {\bf (H1)}
\begin{align*}
\forall n\geq N+1, \forall x\in\E_n, g(x)\le 1-(\vert A\vert-1)\varepsilon.
\end{align*}
Take $B\in\C_n(\D)$ and $x\in B$. Hypothesis{\bf (H3)} implies that $T^ix\in\E_{n-1-i}\subset\E_N$ for all $i\in\{1,\ldots,n-N-1\}$.
Therefore the identity $g_n(x)=g_{n-N}(x)g_N(T^{n-N}x)$ entails for $n\geq N+1$
\begin{equation}\label{eq2}
\forall B\in\C_n(\D), \forall x\in B, g_{n}(x)\leq (1-(|A|-1)\varepsilon)^{n-N}.
\end{equation}
It follows that
\[
P_{g}(\mathcal{D})\leq  \limsup_{n\rightarrow+\infty}\frac{1}{n}\log |\mathcal{D}^{n}|(1-(|A|-1)\varepsilon)^{n-N}.
\]
The rest of the proof runs as before, using hypothesis ${\bf (H2')}$.
\end{proof}

\begin{proof}[Proof of Theorem \ref{theo:ExistsUniqueMixing}]
We already now that existence holds, thanks to hypothesis ${\bf (H1)}$ and ${\bf (H2)}$. 
Remark 2 in \cite{JO} states that,  if for some stationary $\mu$ we have
\[
\int_{X}\mu(dx)\sum_{n}[\textrm{var}_{n}(x)]^{2}<+\infty,
\] 
then $\mu$ is unique. Notice that although \cite{JO} deal with continuous $g$-functions throughout the paper, their uniqueness result only requires existence of a $g$-measure, which is what we have here.

For any point $x\in X$, the sequence  $(\sum_{n=0}^{N}[\textrm{var}_{n}(x)]^{2})_{N\ge0}$ is monotonically increasing and positive, therefore
\begin{align*}
\int_{X}\mu(dx)\sum_{n}[\textrm{var}_{n}(x)]^{2}&=\lim_{N}\int_{X}\mu(dx)\sum_{n=0}^{N}[\textrm{var}_{n}(x)]^{2}\\
&=\lim_{N}\sum_{n=0}^{N}\int_{X}\mu(dx)[\textrm{var}_{n}(x)]^{2}\\
&=\sum_{n}\int_{X}\mu(dx)[\textrm{var}_{n}(x)]^{2}\\
&=\sum_{n}\sum_{x_{-n}^{0}\in A^{n+1}}\mu(x_{-n}^{0})[\textrm{var}_{n}(x_{-n}^{0})]^{2}\,,
\end{align*}
where we used the Beppo-Levi Theorem in the first line, and the fact that $\textrm{var}_{n}(x)$ only depends on $x_{-n}^{0}$ in the last line. We now divide into two parts as follows:
\begin{align*}
\int_{X}\mu(dx)\sum_{n}[\textrm{var}_{n}(x)]^{2}&=\sum_{n}\left[\sum_{x_{-n}^{0}\in\mathcal{D}^{n+1}}\mu(x_{-n}^{0})[\textrm{var}_{n}(x_{-n}^{0})]^{2}\right.\\
&\hspace{2cm}+\left.\sum_{x_{-n}^{0}\in A^{n+1}\setminus\mathcal{D}^{n+1}}\mu(x_{-n}^{0})[\textrm{var}_{n}(x_{-n}^{0})]^{2}\right]
\end{align*}
For the first term of the right-hand side of the equality, we majorate $\textrm{var}_n(x_{-n}^0)$ by 1 and we use the fixed point property of the $g$-measure $\mu$ to obtain, for any $\delta >0$, the existence of $N(\delta)$ such that for all $n> N(\delta)$:
\begin{align*}
\sum_{x_{-n}^{0}\in\mathcal{D}^{n+1}}\mu(x_{-n}^{0})&=\mu(\C_{n+1}(\D))=\mu(L^{n+1}\1_{\C_{n+1}(\D)})\\
&\le\sum_{C\in\C_{n+1}(\D)}\sup_Cg_{n+1}\le(e^{P_g(\D)+\delta})^{n+1}.
\end{align*}
Taking $\delta=-P_g(\D)/2$ (which is strictly positive by hypothesis ${\bf (H2)}$) proves
 $$\sum_n\sum_{x_{-n}^{0}\in\mathcal{D}^{n+1}}\mu(x_{-n}^{0})<\infty.$$
It remains to consider the second term. Recall that if $x_{-n}^{0}\in A^{n+1}\setminus\mathcal{D}^{n+1}$ then there exists $v\in\tau^g$ with $|v|\le n$ such that $v$ is a prefix of $x_{-n}^{-1}$ (denoted by $x_{-n}^{-1}\geq v$). It comes, using {\bf (H4)},
\begin{align*}
\sum_{n}\sum_{x_{-n}^{0}\in A^{n+1}\setminus\mathcal{D}^{n+1}}\mu(x_{-n}^{0})[\textrm{var}_{n}(x_{-n}^{0})]^{2}&=\sum_{n}\sum_{v\in\tau^g:|v|\le n+1}\sum_{x_{-n}^{0}\geq v}\mu(x_{-n}^{0})[\textrm{var}_{n}(x_{-n}^{0})]^{2}\\
&=\sum_{n}\sum_{v\in\tau^g:|v|\le n+1}\mu(v)(\textrm{var}_{n}^{v})^{2}\\
&=\sum_{n}\sum_{v:|v|=n}\mu(v)R_{v}\\&=\sum_{v\in\tau^g:|v|<+\infty}\mu(v)R_{v}<+\infty.
\end{align*}

\end{proof}

\section{Questions and perspectives}

Notice that existence is ensured by an assumption on the set of discontinuous pasts, whereas uniqueness  involves a condition on the set of continuous pasts. For continuous chains, \cite{JO} obtained conditions on the continuity rate of the kernel, ensuring uniqueness.  Making the necessary changes in the hypothesis, Theorem \ref{theo:ExistsUniqueMixing} states that for discontinuous kernel, the same kind of conditions can be used but restricted to the set of continuous pasts, when the measure does not charge the discontinuous pasts. 

 Concerning mixing properties,
 it is known (using the results of \cite{Comets} for example) that chains having summable continuity rate enjoy summable $\phi$-mixing rate. It is natural to expect that, like for the problem of uniqueness, the chains we consider will enjoy the same mixing properties under the same assumption, restricted to the set of continuous pasts.

Finally, it is worth mentioning an interesting parallel with the literature of non-Gibbs state. In this literature, there are examples of stationary measures that are  not \emph{almost-Gibbs},  meaning that there exists stationary measures that give positive weight to the set of discontinuities with respect to both past and future. We do not enter into details and refer to \cite{maes/redig/vanMoffaert/leuven/1999} for the definition of this notion. As far as we know, no such example exist in the world of $g$-measures. More precisely, an interesting question is  whether there exist examples of stationary $g$-measures that are not \emph{almost-regular}, or if, on the contrary, $\mu({\D})=0$ is valid for every stationary $g$-measure.\\

\noindent{\bf Ackowledgement} We gratefully acknowledge X. Bressaud for interesting discussions during the Workshop Jorma's Razor II.

\bibliographystyle{agsm} 
\bibliography{paccaut}

\vskip 10pt
\noindent Sandro Gallo \\
{\sc Instituto de Matem\'atica, Universidade Federal de Rio de Janeiro} \\
{\tt sandro@im.ufrj.br}
\vskip 10pt
\noindent Fr\'ed\'eric Paccaut \\
{\sc Laboratoire Ami\'enois de Math\'ematiques Fondamentales et Appliqu\'ees cnrs umr 7352,} \\
{\sc Universit\'e de Picardie Jules Verne} \\
{\tt frederic.paccaut@u-picardie.fr}

\end{document}